\definecolor{darkred}{rgb}{0.4,0.1,0.1}
\definecolor{darkblue}{rgb}{0.1,0.1,0.4}
\numberwithin{equation}{section}
\theoremstyle{plain}
\newtheorem{theorem}{Theorem}[section]
\newtheorem{lemma}[theorem]{Lemma}
\newtheorem{proposition}[theorem]{Proposition}
\newtheorem{corollary}[theorem]{Corollary}
\theoremstyle{remark}
\newtheorem{remark}[theorem]{Remark}
\theoremstyle{definition}
\newtheorem{example}[theorem]{Example}
\newcommand\cH{\mathcal H}
\DeclareMathOperator{\diver}{div}
\DeclareMathOperator{\Real}{Re}
\definecolor{darkgreen}{rgb}{0.1,0.45,0.1}
\definecolor{darkblue}{rgb}{0.1,0.1,0.4}
\definecolor{darkgrey}{rgb}{0.5,0.5,0.5}
\definecolor{darkred}{rgb}{0.6,0.0,0.0}
\newcommand\void[1]{}
\renewcommand{\phi}{\varphi}
\def\sa{\mathfrak a}
      \def\cC{{\mathcal C}}
   \def\cE{{\mathcal E}}   
   \def\cH{{\mathcal H}}
\def\R{\mathbb{R}}
\def\C{\mathbb{C}}
\def\N{\mathbb{N}}
\renewcommand{\div}{\mathrm{div}\,}
\newcommand{\dom}{\mathrm{dom}\,}
\newcounter{counter_a}
\title[]{Inequalities between Neumann and Dirichlet Laplacian eigenvalues on planar domains}
\author[J.~Rohleder]{Jonathan Rohleder}
\address{Matematiska institutionen \\ Stockholms universitet \\
106 91 Stockholm \\
Sweden}
\email{jonathan.rohleder@math.su.se}
\begin{document}

\begin{abstract}
We generalize a classical inequality between the eigenvalues of the Laplacians with Neumann and Dirichlet boundary conditions on bounded, planar domains: in 1955, Payne proved that below the $k$-th eigenvalue of the Dirichlet Laplacian there exist at least $k + 2$ eigenvalues of the Neumann Laplacian, provided the domain is convex. It has, however, been conjectured that this should hold for any domain. Here we show that the statement indeed remains true for all simply connected planar Lipschitz domains. The proof relies on a novel variational principle.
\end{abstract}

\maketitle

\section{Introduction}

This paper is devoted to a classical question in spectral theory: given a bounded domain in Euclidean space, how many eigenvalues does the Neumann Laplacian have below the first, or the $k$-th, eigenvalue of the Dirichlet Laplacian? This question has recently been relevant, for instance, to the study of nodal domains \cite{CMS19} or the investigation of maxima and minima of eigenfunctions and the hot spots conjecture \cite{MPW22,S21}.

Let us assume that $\Omega \subset \R^d$, $d \geq 2$, is a bounded, connected Lipschitz domain, and let us denote by
\begin{align*}
 0 = \mu_1 < \mu_2 \leq \mu_3 \leq \dots
\end{align*}
the eigenvalues of the Laplacian $- \Delta_{\rm N}$ with Neumann boundary conditions and by
\begin{align*}
 0 < \lambda_1 < \lambda_2 \leq \lambda_3 \leq \dots
\end{align*}
the eigenvalues of the Laplacian $- \Delta_{\rm D}$ with Dirichlet boundary conditions, both counted according to their multiplicities. Classical variational principles imply immediately that $\mu_k \leq \lambda_k$ holds for all $k$. However, even
\begin{align}\label{eq:Filonov}
 \mu_{k + 1} < \lambda_k, \quad k \in \N,
\end{align}
is always true, which was proven by Filonov \cite{F05} in full generality, see also Friedlander's earlier paper \cite{F91} for a slightly weaker result and \cite{AM12} for a generalization of Friedlander's approach. It had in fact been known much earlier that $\mu_2 < \lambda_1$ holds for $d = 2$, see P\'olya \cite{P52}. For convex $\Omega$ even stronger inequalities are known: if $\Omega \subset \R^2$ is convex with $C^2$-smooth boundary, then the inequality
\begin{align*}
 \mu_{k + 2} < \lambda_k, \quad k \in \N,
\end{align*}
holds, proven by Payne \cite{P55}. Levine and Weinberger \cite{LW86} showed that for convex $\Omega$ whose boundary is $C^2$-smooth with H\"older continuous second derivatives, in space dimension $d$,
\begin{align}\label{eq:LW}
 \mu_{k+d} < \lambda_k, \quad k \in \N,
\end{align}
holds, and they proved further inequalities of the type $\mu_{k + R} < \lambda_k$ with $R < d$ for certain non-convex domains, with $R$ depending on curvature properties of the boundary; cf.\ also \cite{A86}. As pointed out in \cite{LW86}, by approximation it can be concluded from \eqref{eq:LW} that 
\begin{align}\label{eq:weakLW}
 \mu_{k + d} \leq \lambda_k, \quad k \in \N,
\end{align}
is true for any convex bounded domain $\Omega \subset \R^d$. The index shift cannot be improved further; for instance, on a disk in the plane, one has $\mu_4 > \lambda_1$; cf.\ Example \ref{ex:disk} below.

It has been conjectured that the inequality \eqref{eq:weakLW} should be true for arbitrary bounded domains in $\R^d$, see, e.g., \cite[Conjecture 3.2.42]{LMP}. However, the proofs by Payne and Levine and Weinberger make explicit use of the signs of curvatures of the boundary and do not extend to non-convex domains. A suspected counterexample on a non-convex domain proposed in \cite[p.\ 207]{LW86} based on numerical computations turned out not to hold; cf.\ \cite[Remark 3.9]{BLP09}. On the other hand, a possible path towards proving \eqref{eq:weakLW} for more general domains was proposed in \cite{BLP09}, but without actually proving it; see Remark~3.9 there. Very recently, the question and its significance have been pointed out again in \cite{CMS19}.

For domains in the plane, the present article is making the step from convex to simply connected: we prove that for any simply connected, bounded Lipschitz domain $\Omega \subset \R^2$ the inequality \eqref{eq:weakLW} holds, that is,
\begin{align}\label{eq:dasGrosseZiel}
 \mu_{k + 2} \leq \lambda_k, \quad k \in \N,
\end{align}
is true. Furthermore, we provide sufficient conditions for this inequality to be strict, see Theorem~\ref{thm:main} below. In particular, we show that
\begin{align*}
 \mu_3 < \lambda_1
\end{align*}
holds on each simply connected planar Lipschitz domain. The latter can for instance be used to exclude closed nodal lines for any third eigenfunction of the Neumann Laplacian on a simply connected domain in the plane; cf.\ Corollary \ref{cor:firstDirichlet}.  

Our method of proving \eqref{eq:dasGrosseZiel} is variational, but differs substantially from all earlier attempts: instead of using the classical variational principles for Neumann and Dirichlet Laplacian eigenvalues, we derive and use a new variational principle, which incorporates the eigenvalues of both operators simultaneously, and whose minimizers are gradients of eigenfunctions instead of the eigenfunctions themselves. In fact, denoting by $\eta_1 \leq \eta_2 \leq \dots$ the union of the positive eigenvalues of $- \Delta_{\rm N}$ and $- \Delta_{\rm D}$, including multiplicities, we show that on any simply connected domain $\Omega \subset \R^2$,
\begin{align*}
 \eta_k = \min_{\substack{U \subset \cH_\sa \\ \dim U = k}} \max_{\substack{u \in U \\ u \neq 0}} \frac{\int_\Omega \left( |\diver u|^2 + |\omega (u)|^2 \right)}{\int_\Omega |u|^2}
\end{align*}
holds for all $k \in \N$, where $\omega (u) = \partial_1 u_2 - \partial_2 u_1$ is the vorticity of a vector field $u$ and $\cH_{\sa}$ consists of all vector fields $u \in L^2 (\Omega)^2$ such that $\diver u, \omega (u) \in L^2 (\Omega)$ and $u |_{\partial \Omega}$ is tangential; see Theorem \ref{thm:sameEV}. After establishing this, we may essentially use vector fields $u$ whose components are eigenfunctions of $- \Delta_{\rm D}$ as test functions to obtain \eqref{eq:dasGrosseZiel}. A similar variational principle was derived recently by the author in connection with the hot spots conjecture \cite{Rprep}. These joint variational principles for Neumann and Dirichlet eigenvalues seem to be unique to the case $d = 2$ and do not have any obvious analogs in higher space dimensions. Therefore it is likely that the approach taken here cannot be extended to higher dimensions.

Finally, we would like to mention that eigenvalue inequalities of the types \eqref{eq:Filonov} and \eqref{eq:LW} also were studied in other situations. These include Robin \cite{GM09,L88,R14} and mixed \cite{LR17} boundary conditions, Schr\"odinger operators with potentials \cite{BRS18,R21}, the Laplacian on the Heisenberg group \cite{FL10,H08} or on manifolds \cite{AL97,M91}, the Stokes operator \cite{DEprep,K10} and polyharmonic operators \cite{Lprep,P19}.

\section{Preliminaries}\label{sec:prel}

Let $\Omega \subset \R^2$ be a bounded, connected Lipschitz domain; cf., e.g., \cite[Chapter 3]{McL}. By Rademacher's theorem, the unit normal vector $\nu (x) = (\nu_1 (x), \nu_2 (x))^\top$ and the unit tangent vector $\tau (x) = (\tau_1 (x), \tau_2 (x))^\top = (- \nu_2 (x), \nu_1 (x))^\top$ are uniquely defined for almost all $x \in \partial \Omega$. We denote by $H^1 (\Omega)$ and $H^2 (\Omega)$ the usual Sobolev spaces on $\Omega$ of order one and two, respectively. On the boundary $\partial \Omega$ we will make use of the Sobolev space $H^{1/2} (\partial \Omega)$ of order $1/2$ and its dual space $H^{- 1/2} (\partial \Omega)$. We write $(\cdot, \cdot)_{\partial \Omega}$ for the sesquilinear duality between $H^{- 1/2} (\partial \Omega)$ and $H^{1/2} (\partial \Omega)$. Recall that the trace map
\begin{align*}
 C^\infty (\overline{\Omega}) \ni \varphi \mapsto \varphi |_{\partial \Omega}
\end{align*}
extends uniquely to a bounded, everywhere defined, surjective operator $H^1 (\Omega) \to H^{1/2} (\partial \Omega)$; for $\phi \in H^1 (\Omega)$ we again write $\phi |_{\partial \Omega}$ for its trace.

In order to define a further boundary map, we consider the space
\begin{align*}
 \cE (\Omega) := \left\{ u = \binom{u_1}{u_2} \in L^2 (\Omega)^2 : \diver u \in L^2 (\Omega) \right\}
\end{align*}
of square-integrable vector fields with square-integrable divergence (taken in the sense of distributions). Equipped with the norm defined by
\begin{align*}
 \|u\|_{\cE (\Omega)}^2 = \int_\Omega \left( |u_1|^2 + |u_2|^2 + |\diver u|^2 \right),
\end{align*}
$\cE (\Omega)$ is a Hilbert space. The mapping
\begin{align*}
 C^\infty (\overline \Omega)^2 \ni u \mapsto \langle u |_{\partial \Omega}, \nu \rangle 
\end{align*}
extends by continuity to a bounded operator $\cE (\Omega) \to H^{- 1/2} (\partial \Omega)$, see \cite[Chapter XIX, \S 1, Theorem 2]{DL}, and we write $\langle u |_{\partial \Omega}, \nu \rangle$ for the image of $u \in \cE (\Omega)$ under this mapping, called the {\em normal trace} of $u$. In particular, the integration-by-parts formula
\begin{align}\label{eq:PI}
 \int_\Omega \langle u, \nabla \phi\rangle + \int_\Omega  \overline{\phi} \diver u = \big( \langle u |_{\partial \Omega}, \nu\rangle, \phi \big)_{\partial \Omega}
\end{align}
holds for all $\phi \in H^1 (\Omega)$ and all $u \in \cE (\Omega)$. For later use we note an analogous formula: define
\begin{align*}
 \nabla^\perp \phi := \binom{- \partial_2 \phi}{\partial_1 \phi} \quad \text{and} \quad \omega (u) := \partial_1 u_2 - \partial_2 u_1.
\end{align*}
The following lemma is an immediate consequence of \eqref{eq:PI}.

\begin{lemma}
Assume that $\Omega$ is a bounded, connected Lipschitz domain and let $u = (u_1, u_2)^\top \in L^2 (\Omega)^2$ such that $\omega (u) \in L^2 (\Omega)$. Then $u^\perp := (- u_2, u_1)^\top$ belongs to $\cE (\Omega)$, and
\begin{align}\label{eq:PIomega}
 \int_\Omega \langle u, \nabla^\perp \phi\rangle + \int_\Omega  \overline{\phi} \, \omega (u) = - \big( \langle u^\perp |_{\partial \Omega}, \nu\rangle, \phi \big)_{\partial \Omega}
\end{align}
holds for all $\phi \in H^1 (\Omega)$.
\end{lemma}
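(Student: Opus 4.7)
The plan is to reduce this identity to a direct application of formula \eqref{eq:PI} for $u^\perp$ in place of $u$. The crucial pointwise algebraic observation is that
\begin{align*}
 \diver u^\perp = \partial_1(-u_2) + \partial_2(u_1) = -\bigl(\partial_1 u_2 - \partial_2 u_1\bigr) = - \omega(u).
\end{align*}
Since $\omega(u) \in L^2(\Omega)$ by hypothesis, this shows that $\diver u^\perp \in L^2(\Omega)$, and since $u^\perp$ is just a permutation-with-sign of the components of $u \in L^2(\Omega)^2$, we get $u^\perp \in \cE(\Omega)$. That handles the first assertion.

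For the integral identity, I would apply \eqref{eq:PI} with $u$ replaced by $u^\perp$, yielding
\begin{align*}
 \int_\Omega \langle u^\perp, \nabla \phi\rangle + \int_\Omega  \overline{\phi} \diver u^\perp = \big( \langle u^\perp |_{\partial \Omega}, \nu\rangle, \phi \big)_{\partial \Omega}
\end{align*}
for every $\phi \in H^1(\Omega)$. Next I would verify the pointwise identity
\begin{align*}
 \langle u, \nabla^\perp \phi\rangle = \langle u^\perp, \nabla \phi\rangle \cdot (-1),
\end{align*}
i.e.\ $\langle u, \nabla^\perp \phi\rangle = -\langle u^\perp, \nabla \phi\rangle$. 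With the convention that $\langle a,b\rangle = a_1 \overline{b_1} + a_2 \overline{b_2}$, both sides equal $-u_1\overline{\partial_2 \phi} + u_2 \overline{\partial_1 \phi}$ (up to the sign), so this is immediate.

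Substituting $\langle u^\perp, \nabla \phi\rangle = -\langle u, \nabla^\perp \phi\rangle$ and $\diver u^\perp = -\omega(u)$ into the above and multiplying through by $-1$ yields exactly \eqref{eq:PIomega}. There is no real obstacle here: the statement is essentially a bookkeeping rewrite of \eqref{eq:PI}, with the only content being the identification of $-\omega(u)$ as the divergence of the perpendicular vector field $u^\perp$ (equivalently, rotating the divergence theorem by $\pi/2$ turns the Gauss formula into a Stokes-type formula in the plane).
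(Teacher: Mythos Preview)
Your proof is correct and is exactly the ``immediate consequence of \eqref{eq:PI}'' that the paper alludes to without spelling out: one checks $\diver u^\perp = -\omega(u)$ and $\langle u^\perp,\nabla\phi\rangle = -\langle u,\nabla^\perp\phi\rangle$, then applies \eqref{eq:PI} to $u^\perp$ and changes sign. The parenthetical ``(up to the sign)'' in your verification is superfluous---both sides of $\langle u,\nabla^\perp\phi\rangle = -\langle u^\perp,\nabla\phi\rangle$ equal $-u_1\overline{\partial_2\phi}+u_2\overline{\partial_1\phi}$ on the nose---but the argument is complete as written.
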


The normal trace on $\cE (\Omega)$ can be used to define a weak version of the normal derivative: let $\psi \in H^1 (\Omega)$ such that $\Delta \psi$, taken distributionally, belongs to $L^2 (\Omega)$. Then $u = \nabla \psi \in \cE (\Omega)$ and, hence, the {\em normal derivative}
\begin{align*}
 \partial_\nu \psi |_{\partial \Omega} := \langle \nabla \psi |_{\partial \Omega}, \nu \rangle \in H^{- 1/2} (\partial \Omega)
\end{align*}
is well-defined by the above. For $u \in H^2 (\Omega)$, $\partial_\nu \psi |_{\partial \Omega}$ can alternatively be defined by taking the trace $\nabla \psi |_{\partial \Omega}\in H^{1/2} (\partial \Omega)^2$ (though even in this case $\partial_\nu \psi$ does not necessarily belong to $H^{1/2} (\partial \Omega)$ as $\nu$ in general is merely bounded).

An important part of our analysis will be based on the well-known structure of the space of vector fields $L^2 (\Omega)^2$. We define
\begin{align*}
 H := \left\{ u \in L^2 (\Omega)^2 : \diver u = 0~\text{in}~\Omega, \left\langle u |_{\partial \Omega}, \nu \right\rangle = 0 \right\}.
\end{align*}
Then $H$ is a closed subspace of $L^2 (\Omega)^2$ and the  Helmholtz decomposition
\begin{align}\label{eq:Helmholtz}
 L^2 (\Omega)^2 = \nabla H^1 (\Omega) \oplus H
\end{align}
holds, see, e.g., \cite[Chapter XIX, \S 1, Theorem 4]{DL}. Further decomposition of $H$ yields
\begin{align*}
 H = \nabla^\perp H_0^1 (\Omega) \oplus H_{\rm c},
\end{align*}
for the space
\begin{align*}
 H_{\rm c} := \left\{u \in H : \omega (u) = 0~\text{in}~\Omega \right\}
\end{align*}
of curl-free vector fields in $H$. In particular, if $\Omega$ is simply connected, then the space $H_{\rm c}$ is trivial and the Helmholtz decomposition \eqref{eq:Helmholtz} takes the form
\begin{align*}
 L^2 (\Omega)^2 = \nabla H^1 (\Omega) \oplus \nabla^\perp H_0^1 (\Omega),
\end{align*}
see, e.g., \cite[Lemma 2.10]{K10}.

Next, we define the Laplacians $- \Delta_{\rm N}$ and $- \Delta_{\rm D}$ on $\Omega$ with Neumann and Dirichlet boundary conditions as
\begin{align*}
 - \Delta_{\rm N} u & = - \Delta u, \quad \dom (- \Delta_{\rm N}) = \left\{u \in H^1 (\Omega) : \Delta u \in L^2 (\Omega), \partial_\nu u |_{\partial \Omega} = 0 \right\},
\end{align*}
and
\begin{align*}
 - \Delta_{\rm D} u & = - \Delta u, \quad \dom (- \Delta_{\rm D}) = \left\{u \in H^1 (\Omega) : \Delta u \in L^2 (\Omega), u |_{\partial \Omega} = 0 \right\}.
\end{align*}
Both are unbounded, self-adjoint operators in $L^2 (\Omega)$, and their spectra consist of isolated eigenvalues of finite multiplicities. Let
\begin{align*}
 0 = \mu_1 < \mu_2 \leq \mu_3 \leq \dots 
\end{align*}
be an enumeration of the eigenvalues of $- \Delta_{\rm N}$, counted with multiplicities, and let
\begin{align*}
 \lambda_1 < \lambda_2 \leq \lambda_3 \leq \dots 
\end{align*}
be the eigenvalues of $- \Delta_{\rm D}$, also these counted according to their multiplicities. As $\Omega$ is connected, the lowest eigenvalue $\mu_1$ of $- \Delta_{\rm N}$ is zero, with multiplicity one and corresponding eigenspace given by the constant functions. The lowest eigenvalue of $- \Delta_{\rm D}$ is positive and has multiplicity one as well.

We conclude this section with a useful little lemma. Informally, its meaning is that for a function being constant on $\partial \Omega$, the derivative in tangential direction must vanish on the whole boundary. We provide a short proof, especially to point out that the statement does not require any additional regularity of the boundary.

\begin{lemma}\label{lem:DTangential}
Assume that $\Omega$ is a bounded, connected Lipschitz domain. Then for each $\phi \in \dom (- \Delta_{\rm D})$ the vector field $\nabla^\perp \phi$ belongs to $\cE (\Omega)$, and
\begin{align*}
 \left\langle \nabla^\perp \phi |_{\partial \Omega}, \nu \right\rangle = 0
\end{align*}
holds.
\end{lemma}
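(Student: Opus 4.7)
The plan is to verify the inclusion $\nabla^\perp\phi \in \cE(\Omega)$ directly and then derive the vanishing of the normal trace by combining the two integration-by-parts formulas \eqref{eq:PI} and \eqref{eq:PIomega}. First I would observe that $\phi \in \dom(-\Delta_{\rm D}) \subset H^1(\Omega)$ implies $\nabla^\perp \phi \in L^2(\Omega)^2$, and the distributional computation $\diver(\nabla^\perp \phi) = -\partial_1 \partial_2 \phi + \partial_2 \partial_1 \phi = 0$ shows that $\nabla^\perp \phi \in \cE(\Omega)$ with vanishing divergence.

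Next, to establish the vanishing of $\langle \nabla^\perp \phi|_{\partial \Omega}, \nu\rangle \in H^{-1/2}(\partial\Omega)$, I would test against arbitrary $\psi \in H^1(\Omega)$. Applying \eqref{eq:PI} to $u = \nabla^\perp \phi$ gives
\begin{align*}
\bigl(\langle \nabla^\perp \phi|_{\partial\Omega}, \nu\rangle, \psi\bigr)_{\partial\Omega} = \int_\Omega \langle \nabla^\perp \phi, \nabla \psi \rangle,
\end{align*}
since $\diver(\nabla^\perp \phi)=0$. Because the Dirichlet trace map $H^1(\Omega) \to H^{1/2}(\partial\Omega)$ is surjective, it suffices to show that the right-hand side vanishes for every such $\psi$.

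To do so, I would apply the companion formula \eqref{eq:PIomega} with the roles interchanged, taking $u = \nabla \psi$ there and using $\phi$ as the scalar test function. Then $u^\perp = \nabla^\perp \psi \in \cE(\Omega)$ and $\omega(\nabla\psi) = \partial_1 \partial_2 \psi - \partial_2 \partial_1 \psi = 0$, so \eqref{eq:PIomega} reduces to
\begin{align*}
\int_\Omega \langle \nabla \psi, \nabla^\perp \phi\rangle = -\bigl(\langle \nabla^\perp \psi|_{\partial\Omega}, \nu\rangle, \phi\bigr)_{\partial\Omega}.
\end{align*}
Since $\phi|_{\partial\Omega}=0$ (as $\phi \in \dom(-\Delta_{\rm D})$), the right-hand side vanishes, and taking the complex conjugate of the identity $\int_\Omega \langle \nabla \psi, \nabla^\perp \phi\rangle = 0$ yields $\int_\Omega \langle \nabla^\perp \phi, \nabla \psi\rangle = 0$, which finishes the proof.

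There is no substantial obstacle here; the only mildly subtle point is the decision to use both integration-by-parts identities in tandem. The formula \eqref{eq:PI} reformulates the normal trace as an interior integral against $\nabla\psi$, while the ``rotated'' formula \eqref{eq:PIomega} is what allows the Dirichlet condition $\phi|_{\partial\Omega}=0$ to be brought to bear on that same integral, thus closing the loop without invoking any additional boundary regularity.
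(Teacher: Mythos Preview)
Your argument is correct but takes a different route than the paper. The paper proceeds by approximation: since $\phi \in H_0^1(\Omega)$, it chooses $(\phi_n) \subset C_0^\infty(\Omega)$ with $\phi_n \to \phi$ in $H^1(\Omega)$, observes that $\nabla^\perp \phi_n \to \nabla^\perp \phi$ in $\cE(\Omega)$ (as all divergences vanish), and concludes by continuity of the normal trace on $\cE(\Omega)$ that $\langle \nabla^\perp \phi|_{\partial\Omega}, \nu\rangle = \lim_n \langle \nabla^\perp \phi_n|_{\partial\Omega}, \nu\rangle = 0$. Your proof instead stays at the level of the integration-by-parts identities: \eqref{eq:PI} converts the normal trace into the interior pairing $\int_\Omega \langle \nabla^\perp\phi, \nabla\psi\rangle$, and then \eqref{eq:PIomega}, applied with $u = \nabla\psi$, rewrites that same integral as a boundary pairing against $\phi|_{\partial\Omega} = 0$. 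The paper's approach is arguably the more direct one, needing only the definition of $H_0^1(\Omega)$ and boundedness of the normal trace; your approach has the appeal of being approximation-free and of illustrating the duality between the two formulas \eqref{eq:PI}--\eqref{eq:PIomega}, but it relies on the surjectivity of the trace map $H^1(\Omega) \to H^{1/2}(\partial\Omega)$ as an extra ingredient.
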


\begin{proof}
As $\phi \in \dom (- \Delta_{\rm D})$, we have $\nabla^\perp \phi \in L^2 (\Omega)^2$ and $\diver \nabla^\perp \phi = 0$. Thus $\nabla^\perp \phi \in \cE (\Omega)$. Since $\dom (- \Delta_{\rm D}) \subset H_0^1 (\Omega)$, there exists a sequence $(\phi_n) \subset C_0^\infty (\Omega)$ such that $\phi_n \to \phi$ in $H^1 (\Omega)$. Then $\nabla^\perp \phi_n \to \nabla^\perp \phi$ in $\cE (\Omega)$, as $\diver \nabla^\perp \phi_n = 0$ for all $n$. Furthermore,
\begin{align*}
 \left\langle \nabla^\perp \phi |_{\partial \Omega}, \nu \right\rangle & = \lim_{n \to \infty} \left\langle \nabla^\perp \phi_n |_{\partial \Omega}, \nu \right\rangle = 0,
\end{align*}
by the boundedness of the normal trace on $\cE (\Omega)$, giving the statement of the lemma.
\end{proof}

\section{A novel variational principle for Neumann and Dirichlet Laplacian eigenvalues}\label{sec:A}

In this section we define a self-adjoint operator $A$ in the space $L^2 (\Omega)^2$ whose spectrum, in the case of a simply connected domain, equals the union of the positive eigenvalues of the Neumann and Dirichlet Laplacians, including multiplicities. This leads, in particular, to a novel variational principle for Neumann and Dirichlet Laplacian eigenvalues. A related variational principle was developed recently by the author in connection with the hot spots conjecture \cite{Rprep}; cf.\ Remark \ref{rem:hotSpots}.

First of all, we only assume that $\Omega \subset \R^2$ is a bounded, connected Lipschitz domain. We define a sesquilinear form $\sa$ in $L^2 (\Omega)^2$ via
\begin{align*}
 \sa [u, v] & = \int_\Omega \left( \diver u \, \overline{\diver v} + \omega (u) \overline{\omega (v)} \right),
\end{align*}
for $u$ and $v$ in its domain
\begin{align*}
 \dom \sa & = \left\{ u \in L^2 (\Omega)^2 : \diver u, \omega (u) \in L^2 (\Omega), \langle u |_{\partial \Omega}, \nu \rangle = 0 \right\}.
\end{align*}
We point out that the normal trace $\langle u |_{\partial \Omega}, \nu\rangle$ is well-defined in the sense of Section~\ref{sec:prel} for $u \in \dom \sa$, since $\dom \sa \subset \cE (\Omega)$. 

In the following, we make use of the theory of semi-bounded sesquilinear forms in Hilbert spaces and associated self-adjoint operators as to be found in \cite[Chapter~VI]{Kato} or \cite[Section X.3]{RS75}. We denote by $(\cdot, \cdot)$ the inner product in $L^2 (\Omega)^2$ and by $\|\cdot\|$ the corresponding norm. The sesqulinear form $\sa$ is symmetric and non-negative definite; in particular,
\begin{align}\label{eq:aInnerProd}
 (u, v)_\sa := (u, v) + \sa [u, v]
\end{align}
defines an inner product on the space $\cH_\sa := \dom \sa$.

\begin{proposition}\label{prop:defA}
Let $\Omega \subset \R^2$ be a bounded, connected Lipschitz domain. The sesquilinear form $\sa$ has a dense domain in $L^2 (\Omega)^2$ and is closed, that is, $\cH_\sa$, equipped with the inner product \eqref{eq:aInnerProd}, is a Hilbert space. In particular, there exists a self-adjoint operator $A$ in $L^2 (\Omega)^2$ such that
\begin{align*}
 \sa [u, v] = (A u, v)
\end{align*}
holds for all $u \in \dom A \subset \dom \sa$ and $v \in \dom \sa$. Moreover, a vector field $u \in \dom \sa$ belongs to $\dom A$ if and only if there exists a vector field $w \in L^2 (\Omega)^2$ such that 
\begin{align*}
 \sa [u, v] = (w, v)
\end{align*}
holds for all $v \in \dom \sa$; in this case, $A u = w$.
\end{proposition}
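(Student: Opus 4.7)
The plan is to verify the standard hypotheses of the first representation theorem for closed, densely defined, symmetric, non-negative sesquilinear forms (as in \cite[Theorem~VI.2.1]{Kato}), from which all four assertions of the proposition will follow at once. Symmetry and non-negativity of $\sa$ are built into its definition, so only density of $\dom \sa$ and closedness of $\sa$ remain to be checked.

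Density is immediate. Any $u \in C_0^\infty(\Omega)^2$ belongs to $\dom \sa$: the components $\diver u$ and $\omega(u)$ are smooth and compactly supported, hence in $L^2(\Omega)$, and $u$ vanishes in a neighborhood of $\partial\Omega$, so $\langle u|_{\partial\Omega}, \nu\rangle = 0$. Since $C_0^\infty(\Omega)^2$ is dense in $L^2(\Omega)^2$, so is $\dom \sa$.

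The main work is closedness, i.e., completeness of $(\cH_\sa, (\cdot,\cdot)_\sa)$. Given a Cauchy sequence $(u_n) \subset \cH_\sa$ in the $\sa$-norm, the sequences $u_n$, $\diver u_n$, $\omega(u_n)$ are Cauchy in $L^2$ and therefore converge to limits $u \in L^2(\Omega)^2$ and $w_1, w_2 \in L^2(\Omega)$. Testing against $\phi \in C_0^\infty(\Omega)$, the distributional identities
\begin{align*}
 \int_\Omega \langle u_n, \nabla \phi\rangle = - \int_\Omega \overline{\phi}\, \diver u_n, \qquad \int_\Omega \langle u_n, \nabla^\perp \phi\rangle = - \int_\Omega \overline{\phi}\, \omega(u_n)
\end{align*}
pass to the limit and identify $\diver u = w_1$ and $\omega(u) = w_2$ in the sense of distributions. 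In particular $u \in \cE(\Omega)$ and $u_n \to u$ in $\cE(\Omega)$. Applying the continuity of the normal trace $\cE(\Omega) \to H^{-1/2}(\partial\Omega)$ recalled in Section~\ref{sec:prel}, one obtains $\langle u|_{\partial\Omega}, \nu\rangle = \lim_{n\to\infty} \langle u_n|_{\partial\Omega}, \nu\rangle = 0$. Therefore $u \in \cH_\sa$ and $u_n \to u$ in the $\sa$-norm, proving closedness.

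With density and closedness established, the first representation theorem yields a unique self-adjoint non-negative operator $A$ in $L^2(\Omega)^2$ with $\dom A \subset \cH_\sa$ and $\sa[u,v] = (Au,v)$ for $u \in \dom A$, $v \in \cH_\sa$, together with exactly the characterization of $\dom A$ claimed in the proposition (a vector field $u \in \cH_\sa$ lies in $\dom A$ precisely when the antilinear functional $v \mapsto \sa[u,v]$ extends continuously to $L^2(\Omega)^2$, i.e., when some $w \in L^2(\Omega)^2$ represents it via $\sa[u,v] = (w,v)$, and then $Au = w$). The only subtle point, which I would flag as the main obstacle, is the preservation of the boundary condition $\langle u|_{\partial\Omega},\nu\rangle = 0$ under $\sa$-limits, since this condition lives in $H^{-1/2}(\partial\Omega)$ rather than in $L^2$; it is nevertheless handled cleanly by the $\cE(\Omega)$-continuity of the normal trace.
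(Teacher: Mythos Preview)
Your proof is correct and follows essentially the same route as the paper: density via $C_0^\infty(\Omega)^2 \subset \dom \sa$, closedness by passing $L^2$-limits through the distributional identities for $\diver$ and $\omega$, and then invoking Kato's representation theorem. The only cosmetic difference is that for the boundary condition the paper tests the integration-by-parts formula \eqref{eq:PI} against arbitrary $\psi \in H^1(\Omega)$ and reads off $\langle u|_{\partial\Omega},\nu\rangle = 0$ from the vanishing of the right-hand side, whereas you phrase the same step as continuity of the normal trace $\cE(\Omega) \to H^{-1/2}(\partial\Omega)$ applied to the $\cE(\Omega)$-convergence $u_n \to u$; these are equivalent formulations of the same fact.
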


\begin{proof}
We only need to prove the mentioned properties of $\sa$. The characterization of $A$ and its domain follows from abstract theory, see, e.g., \cite[Section X.3]{RS75}. As $C_0^\infty (\Omega)^2 \subset \dom \sa$, $\dom \sa$ is dense in $L^2 (\Omega)^2$. Therefore it remains to show that $\sa$ is closed. To this end, let $(u^n)$ be a Cauchy sequence in $\cH_\sa$, that is, 
\begin{align*}
 \int_\Omega \left( |\diver u^n - \diver u^m|^2 + |\omega (u^n) - \omega (u^m)|^2 \right) + \|u^n - u^m\|^2 \to 0
\end{align*}
as $n, m \to \infty$. By the completeness of $L^2 (\Omega)$, there exist $f, g \in L^2 (\Omega), u \in L^2 (\Omega)^2$, such that 
\begin{align*}
 \diver u^n \to f, \quad \omega (u^n) \to g, \quad \text{and} \quad u^n \to u
\end{align*}
in $L^2 (\Omega)$, respectively $L^2 (\Omega)^2$. For all $\phi \in C_0^\infty (\Omega)$ it follows with the help of \eqref{eq:PI}
\begin{align*}
 \int_\Omega f \overline{\phi} = \lim_{n \to \infty} \int_\Omega \diver u^n \overline{\phi} = - \lim_{n \to \infty} \int_\Omega \langle u^n, \nabla \phi \rangle = - \int_\Omega \langle u, \nabla \phi\rangle = \int_\Omega \diver u \overline{\phi},
\end{align*}
which implies $\diver u = f \in L^2 (\Omega)$. Similarly, by \eqref{eq:PIomega},
\begin{align*}
 \int_\Omega g \overline{\phi} = \lim_{n \to \infty} \int_\Omega \omega (u^n) \overline{\phi} = - \lim_{n \to \infty} \int_\Omega \langle u^n, \nabla^\perp \phi \rangle = - \int_\Omega \langle u, \nabla^\perp \phi\rangle = \int_\Omega \omega(u) \overline{\phi},
\end{align*}
giving $\omega (u) = g \in L^2 (\Omega)$. Finally, for the boundary condition, for all $\psi \in H^1 (\Omega)$, we have
\begin{align*}
 \int_\Omega \langle u, \nabla \psi\rangle + \int_\Omega \overline{\psi} \diver u = \lim_{n \to \infty} \left( \int_\Omega \langle u^n, \nabla \psi\rangle + \int_\Omega \overline{\psi} \diver u^n \right) = 0,
\end{align*}
that is, $\langle u |_{\partial \Omega}, \nu \rangle = 0$ by \eqref{eq:PI}. Consequently, $u \in \cH_\sa$ and $\|u^n - u\|_\sa \to 0$ as $n \to \infty$. Hence, $\sa$ is closed.
\end{proof}

Next we compute the spectrum of $A$. Recall that the Helmholtz decomposition reads
\begin{align}\label{eq:HelmholtzRecall}
 L^2 (\Omega)^2 = \nabla H^1 (\Omega) \oplus \nabla^\perp H_0^1 (\Omega) \oplus H_{\rm c}
\end{align}
and that
\begin{align*}
 H_{\rm c} := \left\{u \in L^2 (\Omega)^2 : \diver u = \omega (u) = 0~\text{in}~\Omega, \langle u |_{\partial \Omega}, \nu \rangle = 0 \right\}
\end{align*}
is trivial in case $\Omega$ is simply connected; cf.\ Section \ref{sec:prel}. In the following proposition, we choose orthonormal bases $(\psi_1, \psi_2, \dots)$ and $(\phi_1, \phi_2, \dots)$ of $L^2 (\Omega)$ such that $- \Delta_{\rm N} \psi_j = \mu_j \psi_j$ and $- \Delta_{\rm D} \phi_j = \lambda_j \phi_j$ hold for all $j \in \N$.

\begin{proposition}\label{prop:spec}
Let $\Omega \subset \R^2$ be a bounded, connected Lipschitz domain. Then the following hold.
\begin{enumerate}
 \item For each $j \in \N$, $j \geq 2$, the vector field $\nabla \psi_j$ is non-trivial and belongs to $\ker (A - \mu_j)$. Moreover, the fields $\frac{1}{\sqrt{\mu_j}} \nabla \psi_j$, $j = 2, 3, \dots$, form an orthonormal basis of $\nabla H^1 (\Omega)$.
 \item For each $j \in \N$, the vector field $\nabla^\perp \phi_j$ is non-trivial and belongs to $\ker (A - \lambda_j)$. Moreover, the fields $\frac{1}{\sqrt{\lambda_j}} \nabla^\perp \phi_j$, $j = 1, 2, \dots$, form an orthonormal basis of $\nabla^\perp H_0^1 (\Omega)$.
 \item $\ker A = H_{\rm c}$.
\end{enumerate}
In particular, the spectrum of $A$, taking into account multiplicities, consists of $\mu_2, \mu_3, \dots$, $\lambda_1, \lambda_2, \dots$, and an eigenvalue zero of multiplicity $\dim H_{\rm c}$. If $\Omega$ is simply connected, then the spectrum of $A$ equals the union of the positive spectra of the Neumann and Dirichlet Laplacians, counted with multiplicities. 
\end{proposition}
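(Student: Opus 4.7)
\emph{Plan.} I will verify parts (i), (ii), (iii) independently and then combine them via the Helmholtz decomposition \eqref{eq:HelmholtzRecall} to describe the full spectrum of $A$. The overall strategy is: produce three families of eigenvectors, check they are orthonormal, and show each family is complete in the corresponding summand of the Helmholtz decomposition; since $A$ is self-adjoint, the resulting basis of eigenvectors will determine the spectrum with multiplicities.

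For (i), I would first check that $\nabla\psi_j\in\dom\sa$: distributionally $\diver\nabla\psi_j=\Delta\psi_j=-\mu_j\psi_j\in L^2(\Omega)$, $\omega(\nabla\psi_j)=\partial_1\partial_2\psi_j-\partial_2\partial_1\psi_j=0$, and the normal trace $\langle\nabla\psi_j|_{\partial\Omega},\nu\rangle=\partial_\nu\psi_j|_{\partial\Omega}=0$ by the Neumann boundary condition. Next, for any $v\in\dom\sa$, using that $\psi_j$ may be taken real,
\[
 \sa[\nabla\psi_j,v]=\int_\Omega\diver(\nabla\psi_j)\,\overline{\diver v}=-\mu_j\int_\Omega\psi_j\,\overline{\diver v},
\]
and applying \eqref{eq:PI} to $v$ with test function $\psi_j$ (its boundary pairing vanishes because $\langle v|_{\partial\Omega},\nu\rangle=0$) yields $\int_\Omega\psi_j\,\overline{\diver v}=-(\nabla\psi_j,v)$. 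Hence $\sa[\nabla\psi_j,v]=\mu_j(\nabla\psi_j,v)$, and Proposition \ref{prop:defA} gives $\nabla\psi_j\in\dom A$ with $A\nabla\psi_j=\mu_j\nabla\psi_j$. For $j\geq 2$ the function $\psi_j$ is non-constant, so $\nabla\psi_j\neq 0$. Orthonormality of $\mu_j^{-1/2}\nabla\psi_j$ follows from Green's identity $(\nabla\psi_j,\nabla\psi_k)=\mu_j\delta_{jk}$, killing the boundary term via $\partial_\nu\psi_j|_{\partial\Omega}=0$. Completeness in $\nabla H^1(\Omega)$ is the step that requires care: since $H^1(\Omega)$ coincides with the form domain of $-\Delta_{\rm N}$, any $f\in H^1(\Omega)$ has an expansion $f=\sum c_j\psi_j$ converging in $H^1$ by the spectral theorem on the form domain, so $\nabla f=\sum_{j\geq 2}c_j\nabla\psi_j$ in $L^2(\Omega)^2$.

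For (ii) the argument is the mirror image with $\nabla^\perp$ in place of $\nabla$ and with the roles of divergence and vorticity swapped. Membership $\nabla^\perp\phi_j\in\dom\sa$ now uses $\diver\nabla^\perp\phi_j=0$, $\omega(\nabla^\perp\phi_j)=\Delta\phi_j=-\lambda_j\phi_j$, and Lemma \ref{lem:DTangential} for the tangential boundary condition. The eigenvalue equation $A\nabla^\perp\phi_j=\lambda_j\nabla^\perp\phi_j$ is obtained by replacing \eqref{eq:PI} with \eqref{eq:PIomega} and testing with $\phi_j$, where now the right-hand side of \eqref{eq:PIomega} vanishes because $\phi_j|_{\partial\Omega}=0$. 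Orthonormality comes from Green's identity on $-\Delta_{\rm D}$, and completeness in $\nabla^\perp H_0^1(\Omega)$ from expanding $H_0^1$-functions in Dirichlet eigenfunctions in the $H^1$ norm.

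For (iii), I would simply observe that $u\in\ker A$ iff $\sa[u,u]=0$, which by positivity of both summands of $\sa$ is equivalent to $\diver u=0$ and $\omega(u)=0$ in $L^2$, together with the boundary condition already built into $\dom\sa$; this is exactly the definition of $H_{\rm c}$. Combining (i)--(iii) with the Helmholtz decomposition \eqref{eq:HelmholtzRecall} produces an orthonormal basis of $L^2(\Omega)^2$ consisting of eigenvectors of $A$, so by self-adjointness the spectrum of $A$ (counting multiplicities) is exactly $\{\mu_j\}_{j\geq 2}\cup\{\lambda_j\}_{j\geq 1}$ together with $0$ of multiplicity $\dim H_{\rm c}$, which collapses to the stated union of positive Neumann and Dirichlet spectra when $\Omega$ is simply connected since then $H_{\rm c}=\{0\}$. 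The only step I expect to need slight care with is the $H^1$-convergence of the eigenfunction expansions, which justifies completeness within each Helmholtz summand; the rest is bookkeeping with the integration-by-parts formulas \eqref{eq:PI} and \eqref{eq:PIomega}.
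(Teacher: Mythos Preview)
Your proposal is correct and follows essentially the same route as the paper's proof: verify $\nabla\psi_j,\nabla^\perp\phi_j\in\dom\sa$, compute $\sa$ against an arbitrary $v$ via \eqref{eq:PI} and \eqref{eq:PIomega}, and assemble the pieces through the Helmholtz decomposition. The only minor deviations are that the paper proves completeness in each summand by an orthogonality argument (if $\nabla\psi\perp\nabla\psi_j$ for all $j$ then $\psi$ is constant) rather than your $H^1$-convergent eigenfunction expansion, and it obtains $\ker A=H_{\rm c}$ by first showing $H_{\rm c}\subset\ker A$ and then invoking completeness plus Helmholtz, whereas your direct characterization via $\sa[u,u]=0$ is slightly cleaner and self-contained.
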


\begin{proof}
To show (i), let $j \geq 2$. Clearly $\nabla \psi_j$ is non-trivial and belongs to $\dom \sa$ as $\diver \nabla \psi_j = \Delta \psi_j = - \mu_j \psi_j \in L^2 (\Omega)$, $\omega (\nabla \psi_j) = 0$, and
\begin{align*}
 \langle u |_{\partial \Omega}, \nu \rangle = \langle \nabla \psi |_{\partial \Omega}, \nu\rangle = \partial_\nu \psi |_{\partial \Omega} = 0
\end{align*}
due to the Neumann boundary condition. Thus for all $v \in \dom \sa$,
\begin{align*}
 \sa [\nabla \psi_j, v] & = \int_\Omega (\Delta \psi_j) \, \overline{\diver v} = - \mu_j \int_\Omega \psi_j \overline{\diver v} = \mu_j \int_\Omega \langle \nabla \psi_j, v \rangle,
\end{align*}
where we have used $\langle v |_{\partial \Omega}, \nu\rangle = 0$. Hence $\nabla \psi_j \in \dom A$ and $A \nabla \psi_j = \mu_j \nabla \psi_j$. Moreover, the vector fields $\frac{1}{\sqrt{\mu_j}} \nabla \psi_j$, $j = 2, 3, \dots$, are orthonormal, since
\begin{align*}
 \int_\Omega \left\langle \frac{1}{\sqrt{\mu_j}} \nabla \psi_j, \frac{1}{\sqrt{\mu_k}} \nabla \psi_k \right\rangle & = - \frac{1}{\sqrt{\mu_j \mu_k}} \int_\Omega (\Delta \psi_j) \overline{\psi_k} = \frac{\sqrt{\mu_j}}{\sqrt{\mu_k}} \int_\Omega \psi_j \overline{\psi_k}
\end{align*}
and the $\psi_j$ are orthonormal in $L^2 (\Omega)$. Finally, let $\psi \in H^1 (\Omega)$ such that $\nabla \psi$ is orthogonal in $L^2 (\Omega)^2$ to $\nabla \psi_j$ for each $j \geq 2$. Then
\begin{align*}
 0 & = \int_\Omega \langle \nabla \psi_j, \nabla \psi \rangle = \mu_j \int_\Omega \psi_j \overline{\psi}, \quad j \geq 2.
\end{align*}
As the $\psi_j$, $j = 1, 2, \dots$ form an orthonormal basis of $L^2 (\Omega)$ and $\psi_1$ is constant, this implies that $\psi$ is constant, i.e.\ $\nabla \psi = 0$. Hence $\frac{1}{\sqrt{\mu_j}} \nabla \psi_j$, $j \geq 2$, form an orthonormal basis of $\nabla H^1 (\Omega)$.

The proof of (ii) is completely analogous and most details will be left to the reader. For instance, for each $j \in \N$, $\nabla^\perp \phi_j$ belongs to $\dom \sa$ as $\diver \nabla^\perp \phi_j = 0$, $\omega (\nabla^\perp \phi_j) = \Delta \phi_j = - \lambda_j \phi_j \in L^2 (\Omega)$, and $\langle \nabla^\perp \phi_j |_{\partial \Omega}, \nu \rangle = 0$ by Lemma \ref{lem:DTangential}. Moreover, for each $v \in \dom \sa$,
\begin{align*}
 \sa [\nabla^\perp \phi_j, v] & = \int_\Omega (\Delta \phi_j) \overline{\omega(v)} = - \lambda_j \int_\Omega \phi_j \overline{\omega (v)} = \lambda_j \int_\Omega \langle \nabla^\perp \phi_j, v\rangle
\end{align*}
by \eqref{eq:PIomega}. Thus $\nabla^\perp \phi_j \in \ker (A - \lambda_j)$.

To show (iii), let $w \in H_c \subset \dom \sa$ and $v \in \dom \sa$. Then $\diver w = \omega (w) = 0$ in $\Omega$ and, thus,
\begin{align*}
 \sa [w, v] = \int_\Omega \left( \diver w \, \overline{\diver v} + \omega (w) \overline{\omega (v)} \right) = 0.
\end{align*}
Hence, $w \in \dom A$ and $A w = 0$. We have shown $H_{\rm c} \subset \ker A$. However, as $A$ is a self-adjoint operator in $L^2 (\Omega)^2$ and, by the Helmholtz decomposition~\eqref{eq:HelmholtzRecall}, $H_{\rm c}$ together with the eigenfunctions found in parts (i) and (ii) of this proposition span the whole space, no further elements can exist in the kernel. In particular, we have determined the whole spectrum of $A$ and, hence, the remaining assertions of the proposition follow immediately.
\end{proof}

\begin{example}
Consider the square $\Omega = (0, \pi)^2$. Since $\Omega$ is simply connected, $A$ has only strictly positive eigenvalues, which we can compute explicitly. By separation of variables, the first eigenvalues of $- \Delta_{\rm N}$ are $0, 1, 1, 2, 4, 4, 5, 5, 8$ and the first eigenvalues of $- \Delta_{\rm D}$ are $2, 5, 5, 8$, including multiplicities. Thus the eigenvalues of $A$ are
\begin{align*}
 1, 1, 2, 2, 4, 4, 5, 5, 5, 5, 8, 8, \dots
\end{align*}
\end{example}

The statement of Proposition \ref{prop:spec} on the spectrum of $A$ implies immediately the following variational principle; cf.\ \cite[Section XIII.1]{RS78}. Recall that
\begin{align*}
 \cH_\sa = \dom \sa & = \left\{ u \in L^2 (\Omega)^2 : \diver u, \omega (u) \in L^2 (\Omega), \langle u |_{\partial \Omega}, \nu\rangle = 0\right\}
\end{align*}
and 
\begin{align*}
 \sa [u] := \sa [u, u] = \int_\Omega \left( |\diver u|^2 + |\omega (u)|^2 \right), \quad u \in \dom \sa.
\end{align*}

\begin{theorem}\label{thm:sameEV}
Let $\Omega$ be a bounded, connected Lipschitz domain and let $\eta_1 \leq \eta_2 \leq \dots$ denote the positive eigenvalues of $A$, i.e.\ the union of the positive eigenvalues of the Neumann and Dirichlet Laplacians, counted with multiplicities. Then
\begin{align*}
 \eta_k = \min_{\substack{U \subset \cH_\sa \\ U \perp H_{\rm c} \\ \dim U = k}} \max_{\substack{u \in U \\ u \neq 0}} \frac{\int_\Omega \left( |\diver u|^2 + |\omega (u)|^2 \right)}{\int_\Omega |u|^2}
\end{align*}
holds for all $k \in \N$. In particular, if $\Omega$ is simply connected, then
\begin{align}\label{eq:minMaxSC}
 \eta_k = \min_{\substack{U \subset \cH_\sa \\ \dim U = k}} \max_{\substack{u \in U \\ u \neq 0}} \frac{\int_\Omega \left( |\diver u|^2 + |\omega (u)|^2 \right)}{\int_\Omega |u|^2}
\end{align}
holds for all $k \in \N$.
\end{theorem}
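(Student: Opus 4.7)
The plan is to deduce the formula from the classical min-max principle applied to the operator $A$ from Proposition~\ref{prop:spec}. By that proposition, $A$ is a non-negative self-adjoint operator in $L^2(\Omega)^2$ whose spectrum is pure point and consists only of eigenvalues of finite multiplicity, namely $\mu_j$ ($j \geq 2$), $\lambda_j$ ($j \geq 1$) and a zero eigenvalue of multiplicity $\dim H_{\rm c}$; in particular, the eigenvalues accumulate only at $+\infty$, so $A$ has compact resolvent. The positive eigenvalues, listed with multiplicity, are exactly $\eta_1 \leq \eta_2 \leq \ldots$, and $\ker A = H_{\rm c}$.

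Next, I would isolate the positive part of the spectrum by restricting to the invariant subspace $H_{\rm c}^\perp$. Since $H_{\rm c} = \ker A \subset \dom A$ and $A$ is self-adjoint, both $H_{\rm c}$ and its $L^2$-orthogonal complement $H_{\rm c}^\perp$ are $A$-invariant. The restriction $\widetilde A := A|_{H_{\rm c}^\perp}$ is self-adjoint in $H_{\rm c}^\perp$, has compact resolvent, and its spectrum consists exactly of the $\eta_k$. Its associated form is the restriction of $\sa$ to $\cH_\sa \cap H_{\rm c}^\perp$: indeed, since $H_{\rm c} \subset \cH_\sa$, every $u \in \cH_\sa$ decomposes as $u = u_0 + u_1$ with $u_0 \in H_{\rm c}$ and $u_1 \in \cH_\sa \cap H_{\rm c}^\perp$, and because $\diver u_0 = \omega(u_0) = 0$ we have $\sa[u] = \sa[u_1]$, i.e.\ $\sa$ decouples along $L^2(\Omega)^2 = H_{\rm c} \oplus H_{\rm c}^\perp$.

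Finally, since $\widetilde A$ has compact resolvent and is strictly positive, the classical min-max principle (e.g.\ \cite[Theorem~XIII.1]{RS78}) applied to the form of $\widetilde A$ yields
$$\eta_k = \min_{\substack{U \subset \cH_\sa \cap H_{\rm c}^\perp \\ \dim U = k}} \max_{\substack{u \in U \\ u \neq 0}} \frac{\int_\Omega \left( |\diver u|^2 + |\omega (u)|^2 \right)}{\int_\Omega |u|^2}$$
for every $k \in \N$. A subspace $U \subset \cH_\sa$ satisfies $U \subset H_{\rm c}^\perp$ precisely when $U \perp H_{\rm c}$, which is the first asserted formula. When $\Omega$ is simply connected, $H_{\rm c} = \{0\}$ by the discussion in Section~\ref{sec:prel}, so the orthogonality constraint is vacuous and \eqref{eq:minMaxSC} follows.

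The one place requiring care is the verification that the form domain of $\widetilde A$ equals $\cH_\sa \cap H_{\rm c}^\perp$; everything else is standard once Proposition~\ref{prop:spec} is in hand. That verification reduces to the $\sa$-decoupling along the $L^2$-orthogonal decomposition indicated above, which is immediate from $\diver u_0 = \omega(u_0) = 0$ for $u_0 \in H_{\rm c}$.
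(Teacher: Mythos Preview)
Your proposal is correct and follows essentially the same approach as the paper: the paper derives Theorem~\ref{thm:sameEV} in a single line by saying that Proposition~\ref{prop:spec} ``implies immediately'' the variational principle via the standard min-max theorem from \cite[Section~XIII.1]{RS78}. You have simply spelled out the details behind that implication---restricting $A$ to the invariant subspace $H_{\rm c}^\perp$, identifying the form domain of the restriction, and observing that the orthogonality constraint becomes vacuous when $\Omega$ is simply connected---which is exactly what the paper leaves implicit.
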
 

We conclude this section with a few remarks which are not essential for the main result of this article, but may be useful for the understanding of the operator $A$.

\begin{remark}
The operator $A$ has been defined above in a weak sense, using the sesquilinear form $\sa$. This is sufficient for the purposes of this article. However, the action and domain of $A$ can be computed using integration by parts. For instance, for $u \in \dom A$ and $v \in C_0^\infty (\Omega)^2 \subset \dom \sa$ we have
\begin{align*}
 (A u, v) = \sa [u, v] = - \int_\Omega \left\langle \nabla \div u + \nabla^\perp \omega (u), v \right\rangle = - \int_\Omega \langle \Delta u, v\rangle,
\end{align*}
where the derivatives are first taken in the sense of distributions; the obtained identity then shows
\begin{align*}
 - \Delta u = A u \in L^2 (\Omega)^2,
\end{align*}
that is, $A$ acts as negative Laplacian. Performing a similar integration by parts for arbitrary $v \in \dom \sa$ and investigating the effecting boundary terms yields that $\dom A$ consists of all vector fields $u \in \dom \sa$ such that $\Delta u \in L^2 (\Omega)^2$ and $\omega (u) |_{\partial \Omega} = 0$ in an appropriate weak sense.

Moreover, it follows from Proposition \ref{prop:spec} that $A$ decomposes orthogonally with respect to the Helmholtz decomposition~\eqref{eq:HelmholtzRecall}, 
\begin{align*}
 A = A_{\rm N} \oplus A_{\rm D} \oplus A_{\rm c},
\end{align*}
where $A_{\rm N}$, $A_{\rm D}$ and $A_{\rm c}$ are self-adjoint operators in $\nabla H^1 (\Omega), \nabla^\perp H_0^1 (\Omega)$ and $H_{\rm c}$, respectively. In particular, the spectrum of $A_{\rm N}$ equals the positive spectrum of $- \Delta_{\rm N}$, the spectrum of $A_{\rm D}$ equals the spectrum of $- \Delta_{\rm D}$, and $A_{\rm c}$ is the zero operator on $H_{\rm c}$.
\end{remark}

\begin{remark}\label{rem:hotSpots}
In \cite{Rprep} the author of this article established a variational principle similar to \eqref{eq:minMaxSC}, for the numbers $\eta_1 \leq \eta_2 \leq \dots$ being the union of the positive eigenvalues of $- \Delta_{\rm N}$ and $- \Delta_{\rm D}$: for simply connected $\Omega$ with piecewise smooth boundary whose corners, if any, are convex, he showed 
\begin{align}\label{eq:minMaxHotSpots}
 \eta_k = \min_{\substack{U \subset \cH_{\rm N} \\ \dim U = k}} \max_{u = \binom{u_1}{u_2} \in U \setminus \{0\}} \frac{\int_\Omega \left( |\nabla u_1|^2 + |\nabla u_2|^2 \right) - \int_{\partial \Omega} \kappa |u|^2}{\int_\Omega |u|^2},
\end{align}
where $\kappa$ is the signed curvature of $\partial \Omega$, defined everywhere except at corners, and
\begin{align*}
 \cH_{\rm N} := \left\{ u \in H^1 (\Omega)^2 : \left\langle u |_{\partial \Omega}, \nu \right\rangle = 0 \right\}.
\end{align*}
In fact, under those regularity assumptions, elliptic regularity implies that $\cH_{\sa} = \cH_{\rm N}$, and a computation similar to the one in the proof of \cite[Lemma 3.3]{Rprep} shows that the right-hand sides of \eqref{eq:minMaxSC} and \eqref{eq:minMaxHotSpots} coincide. 
\end{remark}

\begin{remark}
If $\partial \Omega$ is, e.g., piecewise $C^{1,1}$-regular and possible corners are convex, then $\cH_{\sa} \subset H^1 (\Omega)^2$; cf.\ the previous remark. In this case, $\cH_\sa$ is compactly embedded into $L^2 (\Omega)^2$ and, hence, $A$ has a purely discrete spectrum, implying that $\cH_{\rm c} = \ker A$ is finite-dimensional. However, it can in fact be shown that $\dim H_{\rm c}$ equals the first Betti number of $\Omega$, see, e.g.,~\cite[Chapter 3]{MMMT16}.
\end{remark}

\begin{remark}
As an alternative to the variational principle obtained in Theorem~\ref{thm:sameEV} above, one could restrict the sesquilinear form $\sa$ to the subspaces $\nabla H^1 (\Omega)$ and $\nabla^\perp H_0^1 (\Omega)$, respectively, to obtain variational principles for the positive eigenvalues of $- \Delta_{\rm N}$ and $- \Delta_{\rm D}$ separately. For instance, the form
\begin{align*}
 \sa_{\rm N} := \sa \upharpoonright (\dom \sa \cap \nabla H^1 (\Omega)) \quad \text{in}~\nabla H^1 (\Omega)
\end{align*}
gives rise to the min-max principle
\begin{align*}
 \mu_{k + 1} = \min_{\substack{U \subset \cH_\sa \cap \nabla H^1 (\Omega) \\ \dim U = k}} \max_{\substack{u \in U \\ u \neq 0}} \frac{\int_\Omega |\diver u|^2}{\int_\Omega |u|^2}
\end{align*}
for all $k \in \N$. However, this principle is not sufficient for the argument carried out in the proof of the main result of this article below, as the test functions used there are not gradient fields, i.e., they do not lie in the required test space.
\end{remark}

\begin{remark}
The operator $A$ constructed in this section has a natural interpretation in the language of differential forms, which the author was not aware of initially. After a preprint version of the present manuscript had come out, this interpretation was worked out independently by Fries, Goffeng and Miranda \cite{FGM24+}, Hua, M\"unch and Zhang \cite{HMZ24+}, and Muravyev \cite{M24+}; the manuscript \cite{HMZ24+} also provides a generalization to a class of surfaces with boundary. In fact, upon identifying classical vector fields with 1-forms, the identity \eqref{eq:Helmholtz} corresponds to the Hodge decomposition, and the operator $A$ is the Hodge Laplacian with absolute boundary conditions. We refer to the mentioned manuscripts for more details.
\end{remark}

\section{Dirichlet-Neumann eigenvalue inequalities on simply connected domains}

The following theorem is the main result of this article.

\begin{theorem}\label{thm:main}
Let $\Omega \subset \R^2$ be a bounded, simply connected Lipschitz domain. Then 
\begin{align}\label{eq:almostGoal}
 \mu_{k + 2} \leq \lambda_k
\end{align}
holds for all $k \in \N$. If $\lambda_k$ is a simple eigenvalue of $- \Delta_{\rm D}$, or if $\partial \Omega$ contains a straight line segment (i.e.\ a non-empty, relatively open set $\Sigma \subset \partial \Omega$ exists on which the normal vector $\nu$ is constant), then
\begin{align*}
 \mu_{k + 2} < \lambda_k
\end{align*}
holds.
\end{theorem}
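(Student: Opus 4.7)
The plan is to apply the joint variational characterization of Theorem~\ref{thm:sameEV} with a test subspace of dimension $2K+1$, where $K := \#\{j : \lambda_j \leq \lambda_k\} \geq k$ (so $\lambda_K = \lambda_k$). Writing $\eta_1 \leq \eta_2 \leq \dots$ for the sorted union of positive Neumann and Dirichlet eigenvalues, the target $\mu_{k+2} \leq \lambda_k$ is equivalent to $\#\{j \geq 2 : \mu_j \leq \lambda_k\} \geq k+1$; since only $K$ of the first $2K+1$ terms of $(\eta_j)$ can be Dirichlet eigenvalues, producing a $(2K+1)$-dimensional subspace of $\cH_\sa$ with Rayleigh quotient $\leq \lambda_k$ will leave at least $K+1 \geq k+1$ Neumann eigenvalues at or below $\lambda_k$.

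The subspace I would take is
\begin{align*}
\cT = V_{2K} + \spn\{\nabla^\perp \phi_K\}, \qquad V_{2K} := \bigl\{(f,g) : f, g \in \spn\{\phi_1, \dots, \phi_K\}\bigr\}.
\end{align*}
Membership $\cT \subset \cH_\sa$ follows from Lemma~\ref{lem:DTangential} together with the fact that $f, g \in H^1_0(\Omega)$ have vanishing traces, so the divergence, vorticity and normal trace are well behaved. The Rayleigh bound is established in two steps. First, for $w = (f, g) \in V_{2K}$, double integration by parts (legitimate since $f, g \in H^1_0$) annihilates the mixed terms in $\int(|\diver w|^2 + |\omega(w)|^2)$, yielding $\sa[w] = \int(|\nabla f|^2 + |\nabla g|^2) \leq \lambda_k \|w\|^2$ by Parseval expansion in the orthonormal basis $(\phi_j)$ and $\lambda_j \leq \lambda_K = \lambda_k$ for $j \leq K$. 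Second, since $\nabla^\perp \phi_K$ is an $A$-eigenvector at $\lambda_k$ by Proposition~\ref{prop:spec}(ii), the identity $\sa[w, \nabla^\perp \phi_K] = \lambda_k (w, \nabla^\perp \phi_K)$ forces the mixed and quadratic terms in $\sa[u] - \lambda_k \|u\|^2$ to cancel, leaving $\sa[u] - \lambda_k \|u\|^2 = \sa[w] - \lambda_k \|w\|^2 \leq 0$ for every $u = w + c \nabla^\perp \phi_K \in \cT$.

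The main technical hurdle will be verifying $\dim \cT = 2K + 1$, i.e.\ $\nabla^\perp \phi_K \notin V_{2K}$. If it did belong, then both $\partial_1 \phi_K$ and $\partial_2 \phi_K$ would lie in $\spn\{\phi_1, \dots, \phi_K\} \subset H^1_0(\Omega)$ and thus have vanishing trace on $\partial \Omega$; together with $\phi_K|_{\partial \Omega} = 0$ (which already forces the tangential derivative to vanish), this gives $\nabla \phi_K = 0$ on $\partial \Omega$, in particular $\partial_\nu \phi_K|_{\partial \Omega} = 0$. Boundary unique continuation for the Helmholtz equation $(-\Delta - \lambda_K) \phi_K = 0$—classical for bounded Lipschitz domains—then forces $\phi_K \equiv 0$, a contradiction. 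Once the dimension is confirmed, Theorem~\ref{thm:sameEV} yields $\eta_{2K+1} \leq \lambda_k$, and the counting argument delivers $\mu_{k+2} \leq \lambda_k$. This unique-continuation step is the delicate part, since one must handle traces of partial derivatives of a Dirichlet eigenfunction on a general Lipschitz boundary.

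For the strict inequality under the additional hypotheses, the plan is to adjoin further test fields to $\cT$ while preserving the Rayleigh bound. If $\lambda_k$ is a simple Dirichlet eigenvalue and, for contradiction, $\mu_{k+2} = \lambda_k$, there would be Neumann eigenfunctions $\psi_1, \dots, \psi_{m_N}$ at $\lambda_k$; each $\nabla \psi_\ell$ is an $A$-eigenvector at $\lambda_k$ and can be adjoined to $\cT$ by the same cross-term cancellation. Linear independence from $\cT$ reduces, using simplicity, to showing that any Neumann eigenfunction $\Psi$ at $\lambda_k$ with $\nabla \Psi \in \cT$ must vanish: the structure of $\cT$ forces $\Psi$ to be a directional derivative $a\partial_1 \phi_k + b\partial_2 \phi_k$ together with the compatibility identity $b\partial_1\phi_k - a\partial_2\phi_k = c\lambda_k\phi_k$, whose restriction to $\partial \Omega$—combined with $\nabla \phi_k = (\partial_\nu \phi_k) \nu$ there, $\partial_\nu \phi_k \neq 0$ almost everywhere (Hopf and unique continuation), and the fact that the outward normal of a bounded Lipschitz domain cannot be orthogonal to a single fixed vector almost everywhere—yields $a = b = 0$ and hence $\Psi = 0$. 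The enlarged test space has dimension $2k + 1 + m_N$, and the counting now produces at least $k+1$ Neumann eigenvalues strictly below $\lambda_k$, contradicting $\mu_{k+2} = \lambda_k$. For the straight-line segment case, a smoothly truncated tangential vector field supported near the flat piece will lie in $\cH_\sa$ with arbitrarily small Rayleigh quotient, and can be adjoined to $\cT$ to supply an additional test direction that is generically independent of $\cT$, giving the strict bound by the same counting.
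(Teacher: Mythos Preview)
Your argument for the non-strict inequality $\mu_{k+2}\le\lambda_k$ is essentially the paper's: the test space $V_{2K}$, the integration-by-parts identity $\sa[w]=\int(|\nabla f|^2+|\nabla g|^2)$, the cross-term cancellation with an $A$-eigenvector, and the unique-continuation step showing $\nabla^\perp\phi_K\notin V_{2K}$ all match. The paper uses $k$ and adds the full subspace $\nabla^\perp\ker(-\Delta_{\rm D}-\lambda_k)\subset\ker(A-\lambda_k)$ rather than $K$ and a single $\nabla^\perp\phi_K$, but the counting comes out the same.

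For the strict inequality when $\lambda_k$ is simple, your claim that ``the structure of $\cT$ forces $\Psi$ to be a directional derivative $a\partial_1\phi_k+b\partial_2\phi_k$'' is missing a step: from $\nabla\Psi=(f,g)+c\nabla^\perp\phi_k$ one only gets $-\lambda_k\Psi=\partial_1 f+\partial_2 g$, a combination of derivatives of \emph{all} $\phi_1,\dots,\phi_k$. To isolate $\phi_k$ you must first observe that $(f,g)=\nabla\Psi-c\nabla^\perp\phi_k\in\ker(A-\lambda_k)$, whence $-\Delta f=\lambda_k f$, which by simplicity forces $f=\alpha\phi_k$, $g=\beta\phi_k$; this is precisely the paper's key step. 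Also, invoking ``Hopf'' to get $\partial_\nu\phi_k\ne 0$ a.e.\ is not available on a general Lipschitz boundary; the correct route (which the paper takes) is to pick a relatively open $\Gamma\subset\partial\Omega$ on which $\nu$ is not parallel to the fixed vector $(\alpha,\beta)$ and apply Lemma~\ref{lem:UCP} there.

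The line-segment case has a genuine gap. A truncated tangential field $v=\chi\,T$ near the flat piece satisfies $|\diver v|^2+|\omega(v)|^2=|\nabla\chi|^2$, so its Rayleigh quotient equals $\int|\nabla\chi|^2/\int\chi^2$, which is bounded below by a Poincar\'e constant of the (bounded) support of $\chi$ and cannot be made arbitrarily small. More importantly, even a field $v$ with $\sa[v]\le\lambda_k\|v\|^2$ cannot simply be adjoined to $\cT$: the cross-term $2\,\Real\,\sa[v,w]$ is uncontrolled unless $v$ is an $A$-eigenvector at $\lambda_k$, which such a cutoff field is not, so the Rayleigh bound on the enlarged subspace fails. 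The paper's approach is entirely different: it shows directly that $V_{2k}\cap\ker(A-\lambda_k)=\{0\}$ by using the flat segment $\Sigma$. For $u\in V_{2k}\cap\ker(A-\lambda_k)$ one writes $u=\nabla\psi+\nabla^\perp\phi$, so $\omega(u)=-\lambda_k\phi\in H_0^1(\Omega)$ vanishes on $\Sigma$; using local smoothness and $\nabla u_j|_{\Sigma'}=N\,\partial_\nu u_j|_{\Sigma'}$ this gives $\partial_\nu(T_1u_1+T_2u_2)|_{\Sigma'}=0$. Since $T_1u_1+T_2u_2\in\ker(-\Delta_{\rm D}-\lambda_k)$, Lemma~\ref{lem:UCP} forces $T_1u_1+T_2u_2\equiv 0$, reducing to the single-component situation $u=(\alpha,\beta)^\top\phi_k$ already handled.
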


We emphasize the following special case of Theorem \ref{thm:main}.

\begin{corollary}
Assume that $\Omega \subset \R^2$ is a bounded, simply connected Lipschitz domain whose boundary is a polygon. Then
\begin{align}\label{eq:wasWillManMehr}
 \mu_{k + 2} < \lambda_k
\end{align}
holds for all $k \in \N$.
\end{corollary}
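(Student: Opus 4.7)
The plan is essentially immediate: I would deduce the corollary directly from Theorem~\ref{thm:main} by verifying the second of its two sufficient conditions for strict inequality, namely that $\partial\Omega$ contains a straight line segment.

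Concretely, I would first note that by hypothesis $\partial\Omega$ is a polygon, hence a finite union of closed straight line segments $\overline{S_1},\dots,\overline{S_N}$ meeting only at their endpoints (the vertices of the polygon). Pick any edge $S_j$; removing its two endpoints yields a non-empty, relatively open subset $\Sigma \subset \partial\Omega$ on which the unit normal vector $\nu$ is well-defined and constant (it is the inward- or outward-pointing unit normal to the line carrying $S_j$). This is precisely the geometric condition appearing in the second clause of Theorem~\ref{thm:main}.

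Since $\Omega$ is simply connected, bounded and Lipschitz (a polygonal domain is automatically Lipschitz), all hypotheses of Theorem~\ref{thm:main} are fulfilled. Applying that theorem therefore gives
\begin{equation*}
 \mu_{k+2} < \lambda_k \quad\text{for all } k \in \N,
\end{equation*}
which is \eqref{eq:wasWillManMehr}. Since no real work is needed beyond this observation, there is no substantial obstacle; the content of the corollary is simply the recognition that polygonal boundaries automatically furnish the flat piece of boundary required to upgrade the non-strict inequality \eqref{eq:almostGoal} to a strict one.
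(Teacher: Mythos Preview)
Your proposal is correct and matches the paper's approach exactly: the corollary is stated immediately after Theorem~\ref{thm:main} as a direct special case, with no separate proof given, since a polygonal boundary trivially contains a straight line segment on which $\nu$ is constant. There is nothing to add.
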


As stressed for instance in \cite{CMS19}, the number of Neumann Laplacian eigenvalues strictly below $\lambda_1$ is of special interest, amongst others for the study of nodal domains. Since $\lambda_1$ is always a simple eigenvalue, we obtain the following corollary.

\begin{corollary}\label{cor:firstDirichlet}
Let $\Omega \subset \R^2$ be a bounded, simply connected Lipschitz domain. Then
\begin{align}\label{eq:strictFirst}
 \mu_3 < \lambda_1.
\end{align}
In particular, none of the eigenfunctions of $- \Delta_{\rm N}$ corresponding to $\mu_2$ or $\mu_3$ has a closed nodal line.
\end{corollary}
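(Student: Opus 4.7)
The inequality $\mu_3 < \lambda_1$ will follow immediately from Theorem~\ref{thm:main} applied with $k = 1$: since $\Omega$ is connected, the lowest Dirichlet eigenvalue $\lambda_1$ is always simple (as recorded at the end of Section~\ref{sec:prel}), so the simplicity clause of Theorem~\ref{thm:main} forces the strict form $\mu_{1+2} < \lambda_1$.

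For the ``in particular'' assertion I plan a short argument by contradiction. Suppose some eigenfunction $\psi$ of $-\Delta_{\rm N}$ with eigenvalue $\mu \in \{\mu_2, \mu_3\}$ has a closed nodal line $\Gamma$, i.e.\ a connected component of $\{\psi = 0\}$ that is compactly contained in $\Omega$. Since $\Omega$ is a simply connected planar domain and $\psi$ is real-analytic in the interior (away from isolated critical points), the Jordan curve theorem yields a bounded open component $\Omega' \subset \Omega \setminus \Gamma$ with $\partial \Omega' \subseteq \Gamma \subset \Omega$. On this region $\psi\rest{\Omega'}$ solves $-\Delta u = \mu u$ in $\Omega'$ and vanishes on $\partial \Omega'$, so it is a Dirichlet eigenfunction of $-\Delta$ on $\Omega'$ at eigenvalue $\mu$; in particular $\lambda_1(\Omega') \leq \mu$.

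The final ingredient is strict domain monotonicity of the first Dirichlet eigenvalue: extending the first Dirichlet eigenfunction of $\Omega'$ by zero to $\Omega$ gives an admissible $H_0^1(\Omega)$ test function, so $\lambda_1(\Omega) \leq \lambda_1(\Omega')$, and this must be strict because the first Dirichlet eigenfunction on $\Omega$ is strictly positive throughout $\Omega$ while the zero-extension vanishes on $\Omega \setminus \overline{\Omega'}$, a set of positive Lebesgue measure (it contains a neighbourhood of $\partial\Omega$). Chaining the inequalities I obtain
\begin{align*}
 \lambda_1 = \lambda_1(\Omega) < \lambda_1(\Omega') \leq \mu \leq \mu_3,
\end{align*}
contradicting the already established $\mu_3 < \lambda_1$. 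The only non-routine input is this first part, which is handed to us by Theorem~\ref{thm:main}; the rest is a standard combination of interior nodal regularity, the Jordan curve theorem in a simply connected planar domain, and strict Dirichlet domain monotonicity. I therefore anticipate no substantive obstacle beyond pinning down the precise meaning of ``closed nodal line''.
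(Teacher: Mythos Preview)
Your proposal is correct and follows essentially the same route as the paper: deduce $\mu_3<\lambda_1$ from Theorem~\ref{thm:main} via simplicity of $\lambda_1$, then rule out closed nodal lines by the standard Dirichlet domain-monotonicity argument. The only minor difference is that the paper does not bother proving \emph{strict} domain monotonicity: it simply chains $\lambda_1 \leq \lambda_1(\Omega') \leq \mu < \lambda_1$, the final strict inequality coming from the first part, so your extra work justifying $\lambda_1(\Omega) < \lambda_1(\Omega')$ is unnecessary.
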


\begin{proof}
The inequality \eqref{eq:strictFirst} follows from Theorem \ref{thm:main} and the fact that the lowest eigenvalue of $- \Delta_{\rm D}$ is always simple. Furthermore, closed nodal lines can be excluded by the following standard argument: let $\psi$ be an eigenfunction of $- \Delta_{\rm N}$ corresponding to $\mu \in \{\mu_2, \mu_3\}$ and assume, for a contradiction, that $\psi$ has a closed nodal line $\cC$; let $\Omega_0 \subset \Omega$ be the open set for which $\partial \Omega_0 = \cC$. Then $\psi |_{\Omega_0}$ is an eigenfunction of the Laplacian on $\Omega_0$ with Dirichlet boundary conditions. Using domain monotonicity of Dirichlet Laplacian eigenvalues, we obtain, denoting by $\lambda_1 (\Omega_0)$ the smallest eigenvalue of the Dirichlet Laplacian on $\Omega_0$,
\begin{align*}
 \lambda_1 \leq \lambda_1 (\Omega_0) \leq \mu < \lambda_1,
\end{align*}
a contradiction.
\end{proof}

We point out once more that the absence of closed nodal lines for the eigenfunctions corresponding to $\mu_2$ has been known for long time, by the same argument as above and the inequality $\mu_2 < \lambda_1$ due to P\'olya \cite{P52}.

Before we proceed to the proof of Theorem \ref{thm:main}, we provide the following lemma, see, e.g., \cite[Lemma 2.2]{LR17}. Its proof is based on a unique continuation argument.

\begin{lemma}\label{lem:UCP}
Let $\Omega$ be a bounded, connected Lipschitz domain. Moreover, let $f \in H^1 (\Omega)$ be such that $- \Delta f = \lambda f$ holds in the sense of distributions, for some $\lambda \in \R$. If there exists a relatively open, non-empty set $\omega \subset \partial \Omega$ such that $f |_{\omega} = \partial_\nu f |_\omega = 0$ hold, then $f = 0$ identically in $\Omega$.
\end{lemma}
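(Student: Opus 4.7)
The plan is to extend $f$ by zero across $\omega$ so as to obtain a distributional solution of the same eigenvalue equation on a strictly larger open set, and then invoke analyticity of solutions to the constant-coefficient elliptic operator $-\Delta - \lambda$ to propagate the vanishing back to all of $\Omega$.

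First I would fix a point $x_0 \in \omega$ and, using that $\omega$ is relatively open in $\partial \Omega$ and that $\partial \Omega$ is Lipschitz, choose a connected open neighborhood $U$ of $x_0$ in $\R^2$ such that $U \cap \partial \Omega \subset \omega$ and $U \setminus \overline{\Omega}$ is non-empty. Define
\begin{align*}
 \tilde f := \begin{cases} f & \text{on } U \cap \Omega, \\ 0 & \text{on } U \setminus \overline \Omega. \end{cases}
\end{align*}
Since $f|_\omega = 0$, the extension $\tilde f$ lies in $H^1(U)$ with weak gradient $\nabla f$ on $U \cap \Omega$ and zero on $U \setminus \overline \Omega$.

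The key step is to show that $-\Delta \tilde f = \lambda \tilde f$ holds on $U$ in the sense of distributions. For $\varphi \in C_c^\infty(U)$ one has
\begin{align*}
 \int_U \langle \nabla \tilde f, \nabla \varphi \rangle = \int_{U \cap \Omega} \langle \nabla f, \nabla \varphi \rangle,
\end{align*}
and an application of the integration-by-parts formula \eqref{eq:PI} with $u = \nabla f \in \cE(\Omega)$ turns the right-hand side into $\int_{U \cap \Omega} \lambda f \, \overline{\varphi}$ plus the boundary pairing $(\partial_\nu f|_{\partial \Omega}, \varphi|_{\partial \Omega})_{\partial \Omega}$. The boundary pairing vanishes because $\supp \varphi \cap \partial \Omega \subset \omega$ and $\partial_\nu f|_\omega = 0$; this rewrites as $\int_U \lambda \tilde f \, \overline{\varphi}$, giving $-\Delta \tilde f = \lambda \tilde f$ on $U$.

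Interior elliptic regularity for the constant-coefficient equation $(-\Delta - \lambda)\tilde f = 0$ then implies that $\tilde f$ is real-analytic on $U$. Since $\tilde f \equiv 0$ on the non-empty open subset $U \setminus \overline \Omega$ and $U$ is connected, the identity principle for real-analytic functions forces $\tilde f \equiv 0$ on $U$. In particular, $f$ vanishes on the non-empty open set $U \cap \Omega \subset \Omega$. Using once more that $f$ is real-analytic on the connected set $\Omega$, the identity principle gives $f \equiv 0$ on $\Omega$.

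The main obstacle I anticipate is the rigorous cancellation of the boundary term in the integration by parts: the identity $\partial_\nu f|_\omega = 0$ must be interpreted as a localization of an $H^{-1/2}(\partial \Omega)$-distribution, and the vanishing of $(\partial_\nu f|_{\partial \Omega}, \varphi|_{\partial \Omega})_{\partial \Omega}$ for test functions $\varphi$ supported in $U$ requires a careful localization argument compatible with the weak normal trace introduced in Section~\ref{sec:prel}, rather than a pointwise interpretation that would need extra boundary regularity.
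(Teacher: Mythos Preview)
Your argument is correct and is precisely the standard unique continuation argument the paper alludes to; note that the paper does not actually prove Lemma~\ref{lem:UCP} in-text but cites \cite[Lemma 2.2]{LR17} and remarks that ``its proof is based on a unique continuation argument.'' Your extension-by-zero and analyticity reasoning is the natural implementation of that argument, and the subtlety you flag about localizing $\partial_\nu f|_{\partial\Omega}\in H^{-1/2}(\partial\Omega)$ to $\omega$ is exactly the point the paper comments on after the lemma statement, again deferring to \cite{LR17} for the rigorous treatment.
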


It should be pointed out that the lemma mentions the restriction of $\partial_\nu f |_{\partial \Omega} \in H^{-1/2} (\partial \Omega)$ to an open set $\omega \subset \partial \Omega$, for $f \in H^1 (\Omega)$ such that $\Delta f \in L^2 (\Omega)$. This can be made rigorously in the sense of distributions; cf.\ \cite{LR17}. However, in the present article we will apply the lemma only in situations where $f$ is regular enough so that we in fact take the restriction of a regular distribution, i.e.\ a function.

We can now prove Theorem \ref{thm:main}.

\begin{proof}[Proof of Theorem \ref{thm:main}]
As above, we denote by $\eta_1 \leq \eta_2 \leq \dots$ the positive eigenvalues of the operator $A$ defined in the previous section. Recall from Theorem \ref{thm:sameEV} that these eigenvalues coincide with the union of the positive numbers $\lambda_1, \lambda_2, \dots$ and $\mu_2, \mu_3, \dots$, including multiplicities. 

To prove the inequalities of the theorem, fix $k \in \N$ and let $\phi_j$, $j = 1, \dots, k$, be an orthonormal set of eigenfunctions of $- \Delta_{\rm D}$ such that $- \Delta_{\rm D} \phi_j = \lambda_j \phi_j$ holds for $j = 1, \dots, k$. Define
\begin{align}\label{eq:majestatischeAussicht}
 u = \binom{u_1}{u_2} = \sum_{j = 1}^k \left( \alpha_j \binom{\phi_j}{0} + \beta_j \binom{0}{\phi_j} \right),
\end{align}
where $\alpha_j, \beta_j \in \C$. Since the vector fields $\binom{\phi_j}{0}, \binom{0}{\phi_j}$, $j = 1, \dots, k$, form an orthonormal set in $L^2 (\Omega)^2$, they span a $2k$-dimensional subspace. Due to the Dirichlet boundary conditions of the $\phi_j$, each of the vector fields $u$ of the form \eqref{eq:majestatischeAussicht} belongs to $\dom \sa$, and 
\begin{align*}
 \int_\Omega |\nabla u_j|^2 \leq \lambda_k \int_\Omega |u_j|^2, \quad j = 1, 2.
\end{align*}
Thus
\begin{align}\label{eq:schonBesser}
\begin{split}
 \sa [u] & = \int_\Omega \left( |\partial_1 u_1|^2 + |\partial_2 u_2|^2 + |\partial_1 u_2|^2 + |\partial_2 u_1|^2 \right) \\
 & \quad + 2 \Real \int_\Omega \left( (\partial_1 u_1) \overline{(\partial_2 u_2)} - (\partial_1 u_2) \overline{(\partial_2 u_1)} \right) \\
 & = \int_\Omega |\nabla u_1|^2 + \int_\Omega |\nabla u_2|^2 \leq \lambda_k \int_\Omega |u|^2
\end{split}
\end{align}
holds for all $u$ of the form \eqref{eq:majestatischeAussicht}, where we have used integration by parts and the Dirichlet boundary conditions of $u_1$ and $u_2$.

Let now, in addition, $v \in \ker (A - \lambda_k)$ be arbitrary. Then for all $w \in \dom \sa$,
\begin{align*}
 \sa [v, w] = (A v, w) = \lambda_k (v, w)
\end{align*}
holds. In particular, together with \eqref{eq:schonBesser} we get
\begin{align}\label{eq:guteAbschatzung}
\begin{split}
 \sa [u + v] & = \sa [u] + 2 \Real \sa [v, u] + \sa [v] \leq \lambda_k \int_\Omega |u|^2 + 2 \lambda_k \Real \int_\Omega \langle v, u\rangle + \lambda_k \int_\Omega |v|^2 \\
 & = \lambda_k \int_\Omega |u + v|^2.
\end{split}
\end{align}

In order to conclude \eqref{eq:almostGoal} from \eqref{eq:guteAbschatzung}, we prove next that
\begin{align}\label{eq:stabil}
 \dim \left( \{u~\text{of the form~\eqref{eq:majestatischeAussicht}} \} + \ker (A - \lambda_k) \right) \geq 2 k + \dim \ker (- \Delta_{\rm D} - \lambda_k).
\end{align}
Indeed, the vector fields $u$ of the form \eqref{eq:majestatischeAussicht} span a $2k$-dimensional subspace of $\dom \sa$, and the fields $v = \nabla^\perp \phi$ with $\phi \in \ker (- \Delta_{\rm D} - \lambda_k)$ span a subspace of $\ker (A - \lambda_k)$ of dimension $\dim \ker (- \Delta_{\rm D} - \lambda_k)$. Moreover, assume there exists $u$ belonging to both spaces, i.e.\ $u$ can be written \eqref{eq:majestatischeAussicht} and $u = \nabla^\perp \phi$ for some $\phi \in \ker (- \Delta_{\rm D} - \lambda_k)$. Then $\phi$ belongs to $H^2 (\Omega)$ (since $\nabla^\perp \phi = u \in H^1 (\Omega)^2$), and
\begin{align*}
 \partial_\nu \phi |_{\partial \Omega} = \langle \nabla \phi |_{\partial \Omega}, \nu \rangle = - \langle u^\perp |_{\partial \Omega}, \nu \rangle = 0
\end{align*}
holds, as the components of $u$ have a vanishing trace on $\partial \Omega$. However, the latter identity together with $\phi |_{\partial \Omega} = 0$ constantly and $- \Delta \phi = \lambda_k \phi$ implies $\phi = 0$ in $\Omega$ according to Lemma \ref{lem:UCP}. Thus we have proven \eqref{eq:stabil}.

Now let us conclude \eqref{eq:almostGoal}. By combining \eqref{eq:guteAbschatzung} and \eqref{eq:stabil} we obtain
\begin{align*}
 \eta_{2 k + \dim \ker (- \Delta_{\rm D} - \lambda_k)} \leq \lambda_k;
\end{align*}
in other words, in total, $- \Delta_{\rm N}$ and $- \Delta_{\rm D}$ together have at least $2 k + \dim \ker (- \Delta_{\rm D} - \lambda_k)$ eigenvalues in $(0, \lambda_k]$, counted with multiplicities. As $- \Delta_{\rm D}$ has at most $k - 1 + \dim \ker (- \Delta_{\rm D} - \lambda_k)$ eigenvalues in $(0, \lambda_k]$, the number of eigenvalues of $- \Delta_{\rm N}$ in $(0, \lambda_k]$ must at least be
\begin{align*}
 2 k + \dim \ker (- \Delta_{\rm D} - \lambda_k) - \big(k - 1 + \dim \ker (- \Delta_{\rm D} - \lambda_k) \big) = k + 1. 
\end{align*}
Taking into account the eigenvalue $\mu_1 = 0$, this is equivalent to \eqref{eq:almostGoal}.

Let us now come to the sufficient conditions for strict inequality. The latter follows from \eqref{eq:guteAbschatzung} whenever we can show
\begin{align}\label{eq:dasIstES}
 \{u~\text{of the form~\eqref{eq:majestatischeAussicht}} \} \cap \ker (A - \lambda_k) = \{0\};
\end{align}
namely, in this case, the operator $A$ has at least $2 k + \dim \ker (A - \lambda_k)$ eigenvalues in $(0, \lambda_k]$, implying at least $2 k$ eigenvalues in $(0, \lambda_k)$. Out of these are at most $k - 1$ Dirichlet Laplacian eigenvalues, so that $- \Delta_{\rm N}$ has at least $k + 1$ eigenvalues in $(0, \lambda_k)$ and, hence, $k + 2$ eigenvalues in $[0, \lambda_k)$. This is equivalent to \eqref{eq:wasWillManMehr}.

Now we verify~\eqref{eq:dasIstES} under the conditions for strict inequality specified in the theorem. Consider first the case where $\lambda_k$ is a simple eigenvalue of $- \Delta_{\rm D}$. Assume that $u$ is of the form \eqref{eq:majestatischeAussicht} and, at the same time, belongs to $\ker (A - \lambda_k)$. Based on the differential equation $A u = \lambda_k u$ and the structure of the eigenspaces of $A$ analyzed in Proposition \ref{prop:spec} this can be written
\begin{align*}
 u = \binom{\alpha}{\beta} \phi_k = \nabla \psi + c \nabla^\perp \phi_k,
\end{align*}
where $\psi \in \ker (- \Delta_{\rm N} - \lambda_k)$, $\phi_k \in \ker (- \Delta_{\rm D} - \lambda_k)$ is non-trivial, and $\alpha, \beta, c$ are constants. Assume for a contradiction that $(\alpha, \beta) \neq (0, 0)$. We have
\begin{align}\label{eq:ersteHalfte}
 \left\langle \nabla \phi_k, \binom{\overline \alpha}{\overline \beta}^\perp \right\rangle = - \omega (u) = - c \Delta \phi_k = \lambda_k c \phi_k \in H_0^1 (\Omega)
\end{align}
and 
\begin{align*}
 \left\langle \nabla \phi_k, \binom{\overline \alpha}{\overline \beta} \right\rangle = \diver u = \Delta \psi = - \lambda_k \psi \in H^1 (\Omega).
\end{align*}
Thus, $\nabla \phi_k \in H^1 (\Omega)^2$ and, hence, $\phi_k \in H^2 (\Omega)$. Moreover, decomposition of the trace of $\nabla \phi_k$ into its normal and tangential components gives
\begin{align}\label{eq:naAlso}
 \left\langle \nu, \binom{\overline \alpha}{\overline \beta}^\perp \right\rangle \partial_\nu \phi_k |_{\partial \Omega} & = \left\langle \nabla \phi_k, \binom{\overline \alpha}{\overline \beta}^\perp \right\rangle \Big|_{\partial \Omega} + \left\langle \tau, \binom{\overline \alpha}{\overline \beta}^\perp \right\rangle \left\langle \nabla^\perp \phi_k |_{\partial \Omega}, \nu \right\rangle = 0,
\end{align}
where we have employed \eqref{eq:ersteHalfte} and Lemma \ref{lem:DTangential}. Note that there exists a relatively open, non-empty set $\Gamma \subset \partial \Omega$ such that $(\overline{\alpha}, \overline{\beta})^\top$ and $\nu (x)$ are linearly independent for almost all $x \in \Gamma$. Hence, \eqref{eq:naAlso} yields $\partial_\nu \phi_k |_\Gamma = 0$, and since $\phi_k |_{\partial \Omega} = 0$, Lemma \ref{lem:UCP} implies $\phi_k = 0$ constantly in $\Omega$, a contradiction. This gives \eqref{eq:dasIstES}.

Consider now the case that $\Omega$ is such that $\partial \Omega$ contains a line segment, i.e.\ an open set $\Sigma \subset \partial \Omega$, on which the exterior unit normal (and, thus, also the unit tangent vector) is constant. Let $T, N \in \R^2$ be unit vectors such that
\begin{align*}
 T = \tau \quad \text{and} \quad N = \nu \quad \text{on}~\Sigma;
\end{align*}
in particular, $(T, N)$ is an orthonormal basis of $\R^2$. If $u$ is of the form \eqref{eq:majestatischeAussicht} and belongs to $\ker (A - \lambda_k)$, then $u = \nabla \psi + \nabla^\perp \phi$ for some $\psi \in \ker (- \Delta_{\rm N} - \lambda_k)$ and $\phi \in \ker (- \Delta_{\rm D} - \lambda_k)$, and thus
\begin{align}\label{eq:nabitte}
 \omega (u) = \Delta \phi = - \lambda_k \phi \in H_0^1 (\Omega).
\end{align}
Note that there exists a non-empty, open set $\Sigma' \subset \Sigma$ such that $\overline{\Sigma'} \subset \Sigma$ on which $u$ is smooth up to the boundary; cf., e.g., \cite[Theorem 4.18 (ii)]{McL}. Moreover,
\begin{align*}
 \nabla u_j |_{\Sigma'} = \left\langle \nabla u_j |_{\Sigma'}, N \right\rangle N + \left\langle \nabla u_j |_{\Sigma'}, T \right\rangle T = N \partial_\nu u_j |_{\Sigma'}, \quad j = 1, 2,
\end{align*}
by Lemma \ref{lem:DTangential}. Therefore, \eqref{eq:nabitte} yields
\begin{align*}
 0 & = \omega (u) |_{\Sigma'} = \partial_1 u_2 |_{\Sigma'} - \partial_2 u_1 |_{\Sigma'} = N_1 \partial_\nu u_2 |_{\Sigma'} - N_2 \partial_\nu u_1 |_{\Sigma'} \\
 & = \left\langle \binom{\partial_\nu u_1 |_{\Sigma'}}{\partial_\nu u_2 |_{\Sigma'}}, N^\perp \right\rangle = \left\langle \binom{\partial_\nu u_1 |_{\Sigma'}}{\partial_\nu u_2 |_{\Sigma'}}, T \right\rangle = \partial_\nu \left( T_1 u_1 + T_2 u_2 \right) |_{\Sigma'}.
\end{align*}
As $T_1 u_1 + T_2 u_2 \in \ker (- \Delta_{\rm D} - \lambda_k)$, Lemma \ref{lem:UCP} implies 
\begin{align*}
 T_1 u_1 + T_2 u_2 = 0
\end{align*}
constantly in $\Omega$, i.e., $u_1$ and $u_2$ are linearly dependent, in other words,
\begin{align*}
 u = \binom{\alpha}{\beta} \phi_k
\end{align*}
for some non-trivial $\phi_k \in \ker (- \Delta_{\rm D} - \lambda_k)$ and constants $\alpha, \beta$. Now the exact same reasoning as above yields $u = 0$; thus, we have shown \eqref{eq:dasIstES} also in this case.
\end{proof}

\begin{remark}
Although Theorem \ref{thm:sameEV} provides a variational expression also in the case that $\Omega$ is multiply connected, it seems not obvious how our proof of Theorem \ref{thm:main} can be extended to this case. The test functions employed there will in general not be orthogonal to the space $H_{\rm c}$.
\end{remark}

We conclude this article by discussing optimality of the index shift in Theorem \ref{thm:main}.

\begin{example}\label{ex:disk}
Let $\Omega$ be the unit disk in $\R^2$. Then the eigenvalues of $- \Delta_{\rm D}$ are the squared zeroes of the Bessel functions $J_0$ (with multiplicity one) and $J_1, J_2, \dots$ (with multiplicity two). On the other hand, the positive eigenvalues of $- \Delta_{\rm N}$ are the squares of the positive roots of the derivatives $J_0'$ (with multiplicity one) and $J_1', J_2', \dots$ (with multiplicity two); see, e.g., \cite[Section 9.5.3]{PR}. Concretely,
\begin{align*}
 \mu_1 & = 0, \\
 \mu_2 = \mu_3 & \approx 1.84^2, \\
 \mu_4 = \mu_5 & \approx 3.05^2,
\end{align*}
while
\begin{align*}
 \lambda_1 & \approx 2.40^2, \\
 \lambda_2 = \lambda_3 & \approx 3.83^2.
\end{align*}
Hence $\mu_4 > \lambda_1$ but, for instance, $\mu_5 < \lambda_2$, and this remains true for small, sufficiently regular perturbations of $\Omega$, including such where no longer $\lambda_2 = \lambda_3$. This indicates that improved inequalities for selected, but not all, eigenvalues are possible.
\end{example}

\section*{Acknowledgements}
The author gratefully acknowledges financial support by the grant no.\ 2022-03342 of the Swedish Research Council (VR). 

\section*{Data availability statement}
Data sharing not applicable to this article as no datasets were generated or analysed during the current study.

\section*{Conflict of interest statement}
There is no conflict of interest.

\end{document}